\theoremstyle{plain}
\newtheorem{theorem}{Theorem}[section]
\crefname{theorem}{theorem}{theorems}
\Crefname{theorem}{Theorem}{Theorems}
\theoremstyle{definition}
\newtheorem{definition}[theorem]{Definition}
\crefname{definition}{definition}{definitions}
\Crefname{definition}{Definition}{Definitions}
\theoremstyle{remark}
\newtheorem{remark}[theorem]{Remark}
\crefname{remark}{remark}{remarks}
\Crefname{remark}{Remark}{Remarks}
\newcommand{\diff}{\mathrm{d}}
\newcommand{\diffusion}{d}
\newcommand{\fulldim}{n}
\newcommand{\ham}{\mathcal{H}}
\newcommand{\minEig}{\lambda_{\mathrm{min}}}
\newcommand{\N}{\mathbb{N}}
\newcommand{\nSnapshots}{q}
\newcommand{\R}{\mathbb{R}}
\newcommand{\sol}{x}
\newcommand{\solTimeDiscrete}{\breve\sol}
\newcommand{\spaceVar}{\xi}
\newcommand{\tend}{t_{\mathrm{end}}}
\newcommand{\timeInterval}{\mathbb{T}}
\newcommand{\uTimeDiscrete}{\breve{u}}
\newcommand{\yTimeDiscrete}{\breve{y}}
\newcommand{\zph}{z}
\newcommand{\norm}[1]{\left\lVert #1\right\rVert}
\begin{document}
\title{Structure-Preserving Time Discretization of Port-Hamiltonian Systems via Discrete Gradient Pairs}
\author{Philipp Schulze\,\thanks{Technische Universit\"at Berlin, Institute of Mathematics, Berlin, Germany,
\texttt{pschulze@math.tu-berlin.de}.}}

\maketitle            

\begin{abstract}
We discuss structure-preserving time discretization for nonlinear port-Hamiltonian systems with state-dependent mass matrix.
Such systems occur, for instance, in the context of structure-preserving nonlinear model order reduction for port-Hamiltonian systems and, in this context, structure-preserving time discretization is crucial for preserving some of the properties of the time-continuous reduced-order model.
For this purpose, we introduce a new class of time discretization schemes which is based on so-called discrete gradient pairs and leads to an exact power balance on the time-discrete level.
Moreover, for the special case of a pointwise symmetric and positive definite mass matrix, we present an explicit construction of a discrete gradient pair.
Finally, we illustrate the theoretical findings by means of a numerical example, where the time-continuous system is a nonlinear reduced-order model for an advection--diffusion problem.
\vskip .3truecm

{\bf Keywords:} port-Hamiltonian systems, structure-preserving time discretization, discrete gradient methods
\vskip .3truecm

{\bf AMS(MOS) subject classification:} 35Q49, 65P10, 93C55

\end{abstract}

\section{Introduction}

The modeling of physical systems often leads to systems of ordinary differential equations (ODEs) or differential-algebraic equations with particular properties, such as stability or the satisfaction of conservation laws.
In general, such properties may be lost after discretization in time, which may lead to numerical results revealing unphysical behavior, see for instance \cite[ch.~I]{HaiLW06}.
A possible approach for avoiding such issues is to use a structure-preserving time discretization scheme, since the properties are often encoded in an algebraic or geometric structure of the original continuous-time system.
Examples include gradient \cite{HirS74}, Hamiltonian \cite{Arn89}, or port-Hamiltonian (pH) \cite{SchJ14} structures.

Structure-preserving time discretization for Hamiltonian systems has a long history, see for instance \cite{HaiLW06} for a general overview.
In \cite{KotL19} the authors discuss structure-preserving time discretization for pH systems and demonstrate, among other things, that certain classes of collocation methods lead to an exact power balance on the time-discrete level, provided that the Hamiltonian is a quadratic function of the state.
This result is extended to descriptor systems in \cite{MehM19}.
Exact time-discrete power balances may be also obtained for non-quadratic Hamiltonians when using discrete gradient approaches, cf.~\cite{CelH17,FroGLM23}.
Moreover, a structure-preserving time discretization scheme based on a Petrov--Galerkin projection is presented in \cite{EggHS21}.
Structure-preserving techniques for other structures have, e.g., been considered in \cite{JueST19,KunM23,Oet18}.

Those approaches mentioned above which yield an exact power balance on the discrete level for general Hamiltonians have in common that they consider time-continuous systems where the gradient of the Hamiltonian occurs explicitly in the system equations.
In contrast, we consider in the following nonlinear pH systems with state-dependent mass matrix as introduced in \cite{MehM19}.
The difficulty in applying, for instance, discrete gradient methods to such systems is that the gradient of the Hamiltonian does not explicitly occur in the system equations, but only implicitly, see \cref{sec:problemSetting} for more details.
The main contributions of this manuscript are listed in the following.

\begin{itemize}
	\item We introduce the notion of discrete gradient pairs and a corresponding class of time discretization schemes.
		Especially, we show that this class yields an exact power balance on the time-discrete level, cf.~\Cref{thm:timeDiscreteDissipationInequality}.
	\item For the special case where the mass matrix is pointwise symmetric and positive definite, we present an explicit construction of a discrete gradient pair, the so-called midpoint discrete gradient pair, cf.~\Cref{thm:midpointDiscreteGradientPair}.
	\item We demonstrate the satisfaction of the time-discrete power balance by means of a numerical example and show that for this example the experimental order of convergence of the midpoint discrete gradient pair approach is the same as for the implicit midpoint rule, cf.~\cref{sec:numerics}.
\end{itemize}

The remainder of the paper is structured as follows.
In the following section, we formulate the considered mathematical problem.
Then, in \cref{sec:discGrad} we summarize the main idea of discrete gradients and their application to the structure-preserving time discretization of Hamiltonian systems.
The main results are provided in \cref{sec:discGradPairs}, where we introduce discrete gradient pairs and demonstrate how we may use them to obtain a structure-preserving time discretization scheme for nonlinear port-Hamiltonian systems with state-dependent mass matrix.
These theoretical findings are illustrated by means of a numerical example in \cref{sec:numerics}, before we provide a summary and an outlook in \cref{sec:conclusion}.

\paragraph*{Notation}

The set of real numbers is denoted with $\R$ and we use $\R^{m, n}$ for the set of $m\times n$ matrices with real-valued entries.
Moreover, we use $A^\top$ for the transpose of a matrix $A$.
Furthermore, to indicate that a matrix $A\in\R^{m,m}$ is positive (semi-)definite, we use the notation $A>0$ ($A\ge 0$).
For column vectors, we abbreviate $\R^{m,1}$ as $\R^m$ and we write $\norm{\cdot}$ for the Euclidean norm on $\R^m$.
The spaces of continuous and continuously differentiable functions from a suitable subset $U\subseteq \R^m$ to $\R^n$ are denoted with $C(U,\R^n)$ and $C^1(U,\R^n)$, respectively.
Finally, for a function $f$ depending on multiple variables $x_1,\ldots,x_m$, we use the short-hand notation $\partial_{x_i}f\vcentcolon=\frac{\partial f}{\partial x_i}$ for the partial derivative of $f$ with respect to $x_i$ for $i\in\lbrace 1,\ldots,m\rbrace$.

\section{Problem Setting}\label{sec:problemSetting}

We consider  port-Hamiltonian systems of the form
\begin{subequations}
	\label{eq:nonlinearTimeInvariantPHDAE}
	\begin{align}
		\label{eq:PHDAE_stateEq}
		E(\sol(t))\dot{\sol}(t) 	&= (J(\sol(t))-R(\sol(t)))\zph(\sol(t))+B(\sol(t))u(t),\\
		y(t)  							&= B(\sol(t))^\top\zph(\sol(t)),
	\end{align}
\end{subequations}
for all $t\in\timeInterval=[0,\tend]$, with state $\sol\colon \timeInterval\to\R^{\fulldim}$, input $u\colon \timeInterval\to\R^m$, output $y\colon\timeInterval\to \R^m$, and coefficient functions $E,J,R\in C(\R^{\fulldim},\R^{\fulldim,\fulldim})$, $\zph\in C(\R^{\fulldim},\R^{\fulldim})$, and $B\in C(\R^{\fulldim},\R^{\fulldim,m})$.
Associated with \eqref{eq:nonlinearTimeInvariantPHDAE} we consider the Hamiltonian $\ham\colon C^1(\R^{\fulldim},\R)$ and require the coefficients to satisfy pointwise
\begin{equation}
	\label{eq:pHProperties}
	J=-J^\top,\quad R=R^\top\ge 0,\quad E^\top\zph = \nabla\ham,
\end{equation}
cf.~\cite{MehM19}.
As demonstrated in \cite{MehM19}, the properties \eqref{eq:pHProperties} imply that each solution $\sol\in C^1(\timeInterval,\R^{\fulldim})$ of \eqref{eq:PHDAE_stateEq} satisfies the dissipation inequality
\begin{equation*}
	\frac{\diff\ham(\sol(t))}{\diff t} = -\zph(\sol(t))^\top R(\sol(t))\zph(\sol(t))+y(t)^\top u(t) \le y(t)^\top u(t)
\end{equation*}
for all $t\in\timeInterval$.

The goal of this paper is to derive a one-step time integration scheme for \eqref{eq:nonlinearTimeInvariantPHDAE}, based on a time grid $0=t_1<t_2<\ldots<t_{\nSnapshots}=\tend$ and yielding time-discrete approximations $\solTimeDiscrete^1,\ldots,\solTimeDiscrete^{\nSnapshots}$ for $\sol(t_1),\ldots,\sol(t_{\nSnapshots})$, respectively, satisfying a time-discrete power balance of the form
\begin{equation*}
	\frac{\ham(\solTimeDiscrete^{k+1})-\ham(\solTimeDiscrete^k)}{t_{k+1}-t_k} = -f_1(\solTimeDiscrete^k,\solTimeDiscrete^{k+1})+f_2(\solTimeDiscrete^k,\solTimeDiscrete^{k+1},u(\tfrac{t_{k+1}+t_k}2))
\end{equation*}
for $k=1,\ldots,\nSnapshots-1$ with discrete dissipation function $f_1\in C(\R^{\fulldim}\times\R^{\fulldim},\R_{\ge 0})$ and discrete supply rate function $f_{2}\in C(\R^{\fulldim}\times\R^{\fulldim}\times \R^m,\R)$ with $f_{2}(\eta,\xi,0)=0$ for all $(\xi,\eta)\in\R^{\fulldim}\times\R^{\fulldim}$.
Moreover, we require for consistency
\begin{equation*}
	f_1(\sol,\sol) = \zph(\sol)^\top R(\sol)\zph(\sol),\quad f_{2}(\sol,\sol,\hat{u}) = \zph(\sol)^\top B(\sol)\hat{u}
\end{equation*}
for all $\sol\in\R^{\fulldim}$ and $\hat{u}\in \R^m$.

\section{Time Discretization based on Discrete Gradients}\label{sec:discGrad}

For $\ham\in C^1(\R^{\fulldim})$, we call $\overline{\nabla} \ham\in C(\R^{\fulldim}\times\R^{\fulldim},\R^{\fulldim})$ a discrete gradient of $\ham$ if
\begin{enumerate}[(i)]
	\item $\overline{\nabla} \ham(\sol,\sol) = \nabla \ham(\sol)$ holds for all $\sol\in\R^n$, and
	\item $\overline{\nabla} \ham(\sol,\hat{\sol})^T(\hat{\sol}-\sol) = \ham(\hat{\sol})-\ham(\sol)$ hols for all $(\sol,\hat{\sol})\in\R^{\fulldim}\times\R^{\fulldim}$.
\end{enumerate}
An example for a discrete gradient is given by the \emph{midpoint discrete gradient}
\begin{equation}
	\label{eq:midpointDiscreteGradient}
	\overline{\nabla} \ham(\sol,\hat\sol) \vcentcolon= 
	\begin{cases}
		\nabla\ham\left(\frac{\hat\sol+\sol}2\right)+\frac{\ham(\hat\sol)-\ham(\sol)-\nabla\ham\left(\frac{\hat\sol+\sol}2\right)^\top(\hat\sol-\sol)}{\norm{\hat\sol-\sol}^2}(\hat\sol-\sol), & \text{if }\sol\ne\hat\sol,\\
		\nabla\ham(\sol), & \text{otherwise},
	\end{cases}
\end{equation}
cf.~\cite{Gon96}.
Discrete gradients are especially useful in the context of structure-preserving time discretization of Hamiltonian systems of the form
\begin{equation}
	\label{eq:HamSys}
	\dot{\sol}(t) = J\nabla \ham(\sol(t))\quad \text{for all } t\in\timeInterval,\quad \sol(0) = \sol_0
\end{equation}
with $J=-J^\top\in\R^{\fulldim,\fulldim}$.
Here, the structure implies the conservation of the Hamiltonian, which follows from the computation
\begin{equation*}
	\frac{\diff \ham(\sol(t))}{\diff t}(t) = \nabla\ham(\sol(t))^\top\dot{\sol}(t) = \nabla\ham(\sol(t))^\top J\nabla \ham(\sol(t)) = 0
\end{equation*}
for all $t\in\timeInterval$.
For the time discretization, we consider a time grid $0=t_1<t_2<\ldots<t_{\nSnapshots}=\tend$ and the time-discrete system
\begin{align}
	\label{eq:discreteSystem}
	\begin{aligned}
		\solTimeDiscrete^{k+1} &= \solTimeDiscrete^k+(t_{k+1}-t_k)J\overline{\nabla} H\left(\solTimeDiscrete^k,\solTimeDiscrete^{k+1}\right)\quad \text{for }k=1,\ldots,\nSnapshots-1,\\
		\solTimeDiscrete^{1} 																&= \sol_0.
	\end{aligned}
\end{align}
Based on the defining properties of the discrete gradient $\overline{\nabla}H$, we obtain
\begin{align*}
	\ham\left(\solTimeDiscrete^{k+1}\right)-\ham\left(\solTimeDiscrete^k\right) &= \overline{\nabla} \ham\left(\solTimeDiscrete^k,\solTimeDiscrete^{k+1}\right)^\top\left(\solTimeDiscrete^{k+1}-\solTimeDiscrete^k\right)\\
	&=(t_{k+1}-t_k)\overline{\nabla} \ham\left(\solTimeDiscrete^k,\solTimeDiscrete^{k+1}\right)^\top J\overline{\nabla} \ham\left(\solTimeDiscrete^k,\solTimeDiscrete^{k+1}\right)^\top = 0
\end{align*}
for $k=1,\ldots,\nSnapshots-1$, i.e., the Hamiltonian is also a conserved quantity of the time-discrete system.

In \cite{McLQR99} it is shown that discrete gradients may also be useful in the context of dissipative Hamiltonian systems, where the Hamiltonian is not conserved but is non-increasing in time instead.
Furthermore, applying a discrete gradient scheme to a pH system \eqref{eq:nonlinearTimeInvariantPHDAE} with $E=I_n$ yields that the solution of the time-discrete system satisfies a time-discrete analogue of the power balance, cf.~\cite{CelH17,FroGLM23}.

\section{Time Discretization based on Discrete Gradient Pairs}\label{sec:discGradPairs}

The major challenge in extending discrete gradient schemes to systems of the form \eqref{eq:nonlinearTimeInvariantPHDAE} is that the gradient of the Hamiltonian does not occur explicitly, but only implicitly due to \eqref{eq:pHProperties}.
Therefore, we introduce the notion of discrete gradient pairs in the following.

\begin{definition}[Discrete gradient pair]
	\label{def:discGradPair}
	Let $\ham\in C^1(\R^{\fulldim})$ with $\fulldim\in\N$ be given and let $E\in C(\R^{\fulldim},\R^{\fulldim,\fulldim})$ and $\zph\in C(\R^{\fulldim},\R^{\fulldim})$ satisfy
	\begin{equation}
		\label{eq:factorizationOfGradient}
		\nabla\ham(\sol) = E(\sol)^\top\zph(\sol)\quad \text{for all }\sol\in\R^{\fulldim}.
	\end{equation}
	Then, we call $(\overline{E},\overline{\zph})\in C(\R^{\fulldim}\times \R^{\fulldim},\R^{\fulldim,\fulldim})\times C(\R^{\fulldim}\times \R^{\fulldim},\R^{\fulldim})$ a \emph{discrete gradient pair} for $(\ham,E,\zph)$ if the following conditions are satisfied.
	\begin{enumerate}[(i)]
		\item \label{itm:discGradPH2}$\overline{E}(\sol,\sol) = E(\sol)$ for all $\sol\in\R^{\fulldim}$,
		\item \label{itm:discGradPH3}$\overline{\zph}(\sol,\sol) = \zph(\sol)$ for all $\sol\in\R^{\fulldim}$,
		\item \label{itm:discGradPH4}$\overline{\zph}(\sol,\hat\sol)^\top\overline{E}(\sol,\hat\sol)(\hat\sol-\sol) = \ham(\hat\sol)-\ham(\sol)$ for all $(\hat\sol,\sol)\in\R^{\fulldim}\times \R^{\fulldim}$.
	\end{enumerate}
\end{definition}

In the following we demonstrate that in the special case where $E$ is pointwise symmetric and positive definite we may explicitly construct a discrete gradient pair in a similar way as the midpoint discrete gradient considered in the previous section.

\begin{theorem}[Midpoint discrete gradient pair]
	\label{thm:midpointDiscreteGradientPair}
	Let $\ham\in C^1(\R^{\fulldim})$, $E\in C(\R^{\fulldim},\R^{\fulldim,\fulldim})$, and $\zph\in C(\R^{\fulldim},\R^{\fulldim})$ satisfy pointwise the last equality in \eqref{eq:pHProperties}.
	Furthermore, let $E$ be pointwise symmetric and positive definite.
	Then, a discrete gradient pair for $(\ham,E,\zph)$ is given by $\overline{E}\colon \R^{\fulldim}\times \R^{\fulldim}\to \R^{\fulldim,\fulldim}$ and $\overline{\zph}\colon \R^{\fulldim}\times \R^{\fulldim}\to \R^{\fulldim}$ defined via
	\begin{align}
		\overline{E}(\sol,\hat\sol) &\vcentcolon= E\left(\tfrac{\hat\sol+\sol}2\right),\\
		\label{eq:discGrad_zph}
		\overline{\zph}(\sol,\hat\sol) &\vcentcolon= 
		\begin{cases}
			\zph\left(\tfrac{\hat\sol+\sol}2\right)+\frac{\ham(\hat\sol)-\ham(\sol)-\zph\left(\tfrac{\hat\sol+\sol}2\right)^\top\overline{E}(\sol,\hat\sol)(\hat\sol-\sol)}{(\hat\sol-\sol)^\top\overline{E}(\sol,\hat\sol)(\hat\sol-\sol)}(\hat\sol-\sol), & \text{if }\hat\sol\ne\sol,\\
			\zph(\sol), & \text{otherwise}.
		\end{cases}
	\end{align}
\end{theorem}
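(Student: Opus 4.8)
The plan is to verify the three defining conditions of \Cref{def:discGradPair}, together with the continuity requirements on $\overline{E}$ and $\overline{\zph}$ that are built into that definition. Two of the conditions are immediate: since $\overline{E}(\sol,\sol)=E(\tfrac{\sol+\sol}2)=E(\sol)$ and, by the second branch of \eqref{eq:discGrad_zph}, $\overline{\zph}(\sol,\sol)=\zph(\sol)$, conditions (i) and (ii) hold directly, and $\overline{E}$ inherits continuity from $E$. Before turning to condition (iii) I would first check that $\overline{\zph}$ is well defined, i.e., that the denominator in \eqref{eq:discGrad_zph} does not vanish for $\hat\sol\ne\sol$. This is exactly where the positive definiteness of $E$ enters: abbreviating $\spaceVar\vcentcolon=\hat\sol-\sol\ne 0$ and $m\vcentcolon=\tfrac{\hat\sol+\sol}2$, the pointwise positive definiteness of $E(m)$ yields $\spaceVar^\top E(m)\spaceVar>0$, so the fraction defining $\overline{\zph}(\sol,\hat\sol)$ makes sense.

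For condition (iii) the calculation is short and purely algebraic. For $\hat\sol=\sol$ both sides vanish. For $\hat\sol\ne\sol$, writing $c\vcentcolon=\frac{\ham(\hat\sol)-\ham(\sol)-\zph(m)^\top\overline{E}(\sol,\hat\sol)\spaceVar}{\spaceVar^\top\overline{E}(\sol,\hat\sol)\spaceVar}$ so that $\overline{\zph}(\sol,\hat\sol)=\zph(m)+c\,\spaceVar$, I would simply expand
\[
\overline{\zph}(\sol,\hat\sol)^\top\overline{E}(\sol,\hat\sol)\spaceVar
=\zph(m)^\top\overline{E}(\sol,\hat\sol)\spaceVar+c\,\spaceVar^\top\overline{E}(\sol,\hat\sol)\spaceVar,
\]
and observe that the definition of $c$ makes the second summand equal to $\ham(\hat\sol)-\ham(\sol)-\zph(m)^\top\overline{E}(\sol,\hat\sol)\spaceVar$. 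The two copies of $\zph(m)^\top\overline{E}(\sol,\hat\sol)\spaceVar$ then cancel and the right-hand side collapses to $\ham(\hat\sol)-\ham(\sol)$, as required. The nonvanishing of the denominator established in the previous step is precisely what licenses this cancellation.

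The remaining—and, I expect, only genuinely nontrivial—point is the continuity of $\overline{\zph}$ on all of $\R^{\fulldim}\times\R^{\fulldim}$. Away from the diagonal $\lbrace\hat\sol=\sol\rbrace$ this is clear, since there $\overline{\zph}$ is a composition of continuous maps with a nonvanishing denominator. The delicate case is a sequence $(\sol,\hat\sol)\to(\sol_0,\sol_0)$ with $\hat\sol\ne\sol$, where one must show $\overline{\zph}(\sol,\hat\sol)\to\zph(\sol_0)=\overline{\zph}(\sol_0,\sol_0)$; this amounts to controlling the correction term $c\,\spaceVar$. Using the factorization \eqref{eq:factorizationOfGradient} together with the symmetry of $E$, I would rewrite $\zph(m)^\top E(m)=(E(m)\zph(m))^\top=\nabla\ham(m)^\top$, so that the numerator of $c$ becomes the midpoint quadrature remainder $N\vcentcolon=\ham(\hat\sol)-\ham(\sol)-\nabla\ham(m)^\top\spaceVar$. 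Writing $\ham(\hat\sol)-\ham(\sol)=\integral{0}{1}{\nabla\ham(\sol+s\spaceVar)^\top\spaceVar}{s}$ via the fundamental theorem of calculus gives $N=\integral{0}{1}{[\nabla\ham(\sol+s\spaceVar)-\nabla\ham(m)]^\top\spaceVar}{s}$, and uniform continuity of $\nabla\ham$ on a compact neighborhood of $\sol_0$ yields $\lvert N\rvert\le\varepsilon\norm{\spaceVar}$ once $\sol,\hat\sol$ are close enough to $\sol_0$.

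Combining this with the coercivity bound $\spaceVar^\top E(m)\spaceVar\ge\minEig(E(m))\norm{\spaceVar}^2$ and the fact that $\minEig(E(m))\to\minEig(E(\sol_0))>0$ by continuity of $E$, I would estimate
\[
\norm{c\,\spaceVar}=\frac{\lvert N\rvert\,\norm{\spaceVar}}{\spaceVar^\top E(m)\spaceVar}\le\frac{\lvert N\rvert}{\minEig(E(m))\,\norm{\spaceVar}}\le\frac{\varepsilon}{\minEig(E(m))}\longrightarrow 0,
\]
so that $\overline{\zph}(\sol,\hat\sol)=\zph(m)+c\,\spaceVar\to\zph(\sol_0)$, which establishes continuity on the diagonal. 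This positivity-based lower bound on the denominator is the crux of the argument and the place where the hypotheses on $E$ are essential; the algebraic identity (iii) and the boundary conditions (i)–(ii) are then routine.
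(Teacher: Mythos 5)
Your proof is correct and takes essentially the same route as the paper's: conditions (i) and (ii) by inspection, (iii) by the direct cancellation in the definition of $\overline{\zph}$, and continuity across the diagonal by bounding the correction term's numerator via a Taylor/fundamental-theorem remainder for $\nabla\ham$ while bounding the denominator from below by $\minEig\left(E\left(\tfrac{\hat\sol+\sol}2\right)\right)\norm{\hat\sol-\sol}^2$. Your explicit verification of well-definedness and the rewriting $\zph(m)^\top E(m)=\nabla\ham(m)^\top$ merely spell out steps the paper leaves implicit, so there is no substantive difference in approach.
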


\begin{proof}
	The definitions of $\overline{E}$ and $\overline{z}$ imply that the conditions \eqref{itm:discGradPH2} and \eqref{itm:discGradPH3} from \Cref{def:discGradPair} are satisfied.
	Furthermore, condition \eqref{itm:discGradPH4} follows from a straightforward calculation exploiting the special construction of $\overline{\zph}$.	
	In addition, the continuity of $E$, $\zph$, $\ham$ and the pointwise symmetry and positive definiteness of $E$ imply that $\overline{E}$ is continuous and that $\overline{\zph}$ is continuous in $\lbrace (\hat\sol,\sol)\in\R^{\fulldim}\times\R^{\fulldim} \mid \hat\sol\ne \sol \rbrace$.
	It remains to show the continuity of $\overline{\zph}$ in $\lbrace (\hat\sol,\sol)\in\R^{\fulldim}\times\R^{\fulldim} \mid \hat\sol= \sol \rbrace$.
	To this end, let $\tilde\sol\in\R^{\fulldim}$ be arbitrary and let $(x_k,y_k)_{k\in\N}$ be a sequence in $\R^{\fulldim}\times\R^{\fulldim}$ with $(x_k,y_k)\ne (\tilde\sol,\tilde\sol)$ for all $k\in\N$ and $\lim_{k\to\infty}(x_k,y_k) = (\tilde\sol,\tilde\sol)$.
	If $x_k = y_k$ holds for all $k\in\N$, we have
	\begin{equation*}
		\lim_{k\to\infty}\norm{\overline{\zph}(x_k,y_k)-\overline{\zph}(\tilde\sol,\tilde\sol)} = \lim_{k\to\infty}\norm{\zph(x_k)-\zph(\tilde\sol)} = 0.
	\end{equation*}		
	On the other hand, if $x_k\ne y_k$ holds for all $k\in\N$, Taylor's theorem yields
	\begin{align*}
		0 	&\le \lim_{k\to\infty}\norm{\overline{\zph}(x_k,y_k)-\overline{\zph}(\tilde\sol,\tilde\sol)}\\
			&\le \lim_{k\to\infty}\norm{\zph\left(\frac{x_k+y_k}2\right)-\zph(\tilde\sol)}\\
			&\quad+\lim_{k\to\infty}\left\lvert\frac{\ham(y_k)-\ham(x_k)-\nabla\ham\left(\frac{x_k+y_k}2\right)^\top(y_k-x_k)}{(y_k-x_k)^\top E\left(\frac{x_k+y_k}2\right)(y_k-x_k)}\right\rvert\norm{y_k-x_k}\\
			&\le \lim_{k\to\infty}\frac{\max_{t\in[-1,1]}\norm{\nabla\ham\left(\frac{x_k+y_k+t(y_k-x_k)}2\right)-\nabla\ham\left(\frac{x_k+y_k}2\right)}}{\minEig\left(E\left(\frac{x_k+y_k}2\right)\right)} = 0
	\end{align*}
	and, thus, $\lim_{k\to\infty}\norm{\overline{\zph}(x_k,y_k)-\overline{\zph}(\tilde\sol,\tilde\sol)} = 0$.
	All other cases may be reduced to the two considered ones by removing a finite number of sequence members or by splitting the sequence into two partial sequences.
\end{proof}

Similarly as in the previous section, we aim to use the concept of discrete gradient pairs to derive a suitable time-discrete approximation of \eqref{eq:nonlinearTimeInvariantPHDAE} which ensures a dissipation inequality on the time-discrete level.
To this end, we consider a time grid $0=t_1<t_2<\ldots<t_{\nSnapshots}=\tend$ and propose the discrete-time system
\begin{subequations}
	\label{eq:nonlinearTimeInvariantPHDAE_timeDiscrete}
	\begin{align}
		\label{eq:nonlinearTimeInvariantPHDAE_timeDiscretStateEq}
		\begin{split}
			\overline{E}(\solTimeDiscrete^k,\solTimeDiscrete^{k+1})\solTimeDiscrete^{k+1} &= \overline{E}(\solTimeDiscrete^k,\solTimeDiscrete^{k+1})\solTimeDiscrete^k+(t_{k+1}-t_k)\overline{B}(\solTimeDiscrete^k,\solTimeDiscrete^{k+1})\uTimeDiscrete^{k+\frac12}\\
			&+(t_{k+1}-t_k)\left(\overline{J}(\solTimeDiscrete^k,\solTimeDiscrete^{k+1})-\overline{R}(\solTimeDiscrete^k,\solTimeDiscrete^{k+1})\right)\overline{\zph}(\solTimeDiscrete^k,\solTimeDiscrete^{k+1}),
		\end{split}
		\\
		\label{eq:nonlinearTimeInvariantPHDAE_timeDiscreteOutput}
		\yTimeDiscrete^{k+\frac12}	 &= \overline{B}(\solTimeDiscrete^k,\solTimeDiscrete^{k+1})^\top\overline{\zph}(\solTimeDiscrete^k,\solTimeDiscrete^{k+1})
	\end{align}
\end{subequations}
for $k=1,\ldots,\nSnapshots-1$, where $\overline{E}$ and $\overline{\zph}$ are assumed to form a discrete gradient pair for $(\ham,E,\zph)$.
Furthermore, $\solTimeDiscrete^i\in\R^{\fulldim}$ corresponds to an approximation of $\sol(t_i)$ for $i=1,\ldots,\nSnapshots$.
The time-discrete input values are chosen as the evaluations of $u$ at the corresponding midpoint, i.e., $\uTimeDiscrete^{i+\frac12} = u(\frac12(t_i+t_{i+1}))$ for $i=1,\ldots,\nSnapshots-1$.
Moreover, we require that $\overline{B}\colon \R^{\fulldim}\times\R^{\fulldim}\to \R^{\fulldim,m}$ and $\overline{J},\overline{R}\colon \R^{\fulldim}\times\R^{\fulldim}\to \R^{\fulldim,\fulldim}$ are continuous and satisfy
\begin{equation*}
	\overline{B}(\sol,\sol) = B(\sol),\quad \overline{J}(\sol,\sol) = J(\sol),\quad \overline{R}(\sol,\sol) = R(\sol)
\end{equation*}
for all $\sol\in\R^{\fulldim}$.
Moreover, we assume that $\overline{J}$ is pointwise skew-symmetric and $\overline{R}$ pointwise symmetric and positive semi-definite.
All these properties are, for instance, satisfied when using the implicit midpoint rule for the approximation of $J$, $R$, and $B$.

By construction, the time-discrete system \eqref{eq:nonlinearTimeInvariantPHDAE_timeDiscrete} leads to a time-discrete analogue of the dissipation inequality as detailed in the following theorem.
We note in particular that $E$ is not required to be pointwise invertible, symmetric, or positive semi-definite.
Moreover, we emphasize that the terms occurring in the time-discrete power balance satisfy the properties mentioned in \cref{sec:problemSetting}.

\begin{theorem}[Dissipation inequality for the time-discrete system \eqref{eq:nonlinearTimeInvariantPHDAE_timeDiscrete}]
	\label{thm:timeDiscreteDissipationInequality}
	Consider a port-Hamiltonian system of the form \eqref{eq:nonlinearTimeInvariantPHDAE} with time interval $\timeInterval=[0,\tend]$, $\tend\in\R_{>0}$, associated Hamiltonian $\ham\in C^1(\R^{\fulldim})$, and coefficient functions $E,J,R\in C(\R^{\fulldim},\R^{\fulldim,\fulldim})$, $\zph\in C(\R^\fulldim,\R^\fulldim)$, and $B\in C(\R^\fulldim,\R^{\fulldim,m})$ satisfying pointwise \eqref{eq:pHProperties}.
	Furthermore, let $t_1,t_2,\ldots,t_{\nSnapshots}\in\timeInterval$ with $0=t_1<t_2<\ldots<t_{\nSnapshots}=\tend$ be given and let $(\overline{E},\overline{\zph})\in C(\R^{\fulldim}\times \R^{\fulldim},\R^{\fulldim,\fulldim})\times C(\R^{\fulldim}\times \R^{\fulldim},\R^{\fulldim})$ be a discrete gradient pair for $(\ham,E,\zph)$.
	Besides, let $\uTimeDiscrete^{\frac32},\uTimeDiscrete^{\frac52},\ldots,\uTimeDiscrete^{\nSnapshots-\frac12} \in\R^m$ be such that there exists a sequence $(\solTimeDiscrete^1,\solTimeDiscrete^2,\ldots,\solTimeDiscrete^{\nSnapshots})$ in $\R^{\fulldim}$ satisfying \eqref{eq:nonlinearTimeInvariantPHDAE_timeDiscretStateEq} for $k=1,\ldots,\nSnapshots-1$.
	Then, every such sequence satisfies the time-discrete dissipation inequality
	\begin{equation}
		\label{eq:timeDiscreteDissipationInequality}
		\begin{aligned}
			\frac{\ham(\solTimeDiscrete^{k+1})-\ham(\solTimeDiscrete^{k})}{t_{k+1}-t_k} &= -\overline{\zph}(\solTimeDiscrete^k,\solTimeDiscrete^{k+1})^\top \overline{R}(\solTimeDiscrete^k,\solTimeDiscrete^{k+1})\overline{\zph}(\solTimeDiscrete^k,\solTimeDiscrete^{k+1})+\left(\yTimeDiscrete^{k+\frac12}\right)^\top\uTimeDiscrete^{k+\frac12}\\
			&\le \left(\yTimeDiscrete^{k+\frac12}\right)^\top\uTimeDiscrete^{k+\frac12}
		\end{aligned}
	\end{equation}
	for $k=1,\ldots,\nSnapshots-1$, where $\yTimeDiscrete^{\frac32},\yTimeDiscrete^{\frac52},\ldots,\yTimeDiscrete^{\nSnapshots-\frac12}$ are defined via \eqref{eq:nonlinearTimeInvariantPHDAE_timeDiscreteOutput}.
\end{theorem}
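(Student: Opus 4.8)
The plan is to mirror the continuous-time dissipation computation from \cref{sec:problemSetting}, replacing each continuous object by its discrete-gradient-pair counterpart and using property \eqref{itm:discGradPH4} as a discrete chain rule. Throughout I suppress the arguments $(\solTimeDiscrete^k,\solTimeDiscrete^{k+1})$ of $\overline{E},\overline{\zph},\overline{J},\overline{R},\overline{B}$ to lighten notation, fix $k\in\{1,\ldots,\nSnapshots-1\}$, and write $h\vcentcolon= t_{k+1}-t_k>0$.

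First I would apply property \eqref{itm:discGradPH4} of the discrete gradient pair with $\sol=\solTimeDiscrete^k$ and $\hat\sol=\solTimeDiscrete^{k+1}$ to obtain
\begin{equation*}
	\ham(\solTimeDiscrete^{k+1})-\ham(\solTimeDiscrete^k) = \overline{\zph}^\top\overline{E}\,(\solTimeDiscrete^{k+1}-\solTimeDiscrete^k).
\end{equation*}
This is the step that does the essential work, since it converts the Hamiltonian increment into exactly the quantity $\overline{E}(\solTimeDiscrete^{k+1}-\solTimeDiscrete^k)$ that the discrete state equation controls. Next I would rearrange \eqref{eq:nonlinearTimeInvariantPHDAE_timeDiscretStateEq} into
\begin{equation*}
	\overline{E}\,(\solTimeDiscrete^{k+1}-\solTimeDiscrete^k) = h\,\overline{B}\,\uTimeDiscrete^{k+\frac12}+h\,(\overline{J}-\overline{R})\overline{\zph},
\end{equation*}
and substitute it into the preceding identity, which yields
\begin{equation*}
	\ham(\solTimeDiscrete^{k+1})-\ham(\solTimeDiscrete^k) = h\left(\overline{\zph}^\top\overline{B}\,\uTimeDiscrete^{k+\frac12}+\overline{\zph}^\top\overline{J}\,\overline{\zph}-\overline{\zph}^\top\overline{R}\,\overline{\zph}\right).
\end{equation*}

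I would then use that $\overline{J}$ is pointwise skew-symmetric to cancel the middle term, $\overline{\zph}^\top\overline{J}\,\overline{\zph}=0$, and identify the first term with the supply rate via the output equation \eqref{eq:nonlinearTimeInvariantPHDAE_timeDiscreteOutput}, namely $\overline{\zph}^\top\overline{B}\,\uTimeDiscrete^{k+\frac12}=(\overline{B}^\top\overline{\zph})^\top\uTimeDiscrete^{k+\frac12}=(\yTimeDiscrete^{k+\frac12})^\top\uTimeDiscrete^{k+\frac12}$. Dividing by $h>0$ gives the claimed equality in \eqref{eq:timeDiscreteDissipationInequality}, and the inequality follows from the assumed pointwise positive semi-definiteness of $\overline{R}$, which forces $\overline{\zph}^\top\overline{R}\,\overline{\zph}\ge 0$.

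I do not anticipate a genuine obstacle here: the argument is a direct algebraic cancellation, and the entire content is packed into \Cref{def:discGradPair}, whose condition \eqref{itm:discGradPH4} is engineered precisely so that this discrete computation parallels the continuous one. The only points requiring care are that no invertibility, symmetry, or definiteness of $\overline{E}$ (or of $E$) is ever used, only the factorization encoded in \eqref{itm:discGradPH4}, and that the skew-symmetry and semi-definiteness hypotheses on $\overline{J}$ and $\overline{R}$ are each invoked at exactly the right place.
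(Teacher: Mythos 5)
Your proposal is correct and follows essentially the same route as the paper's proof: property \eqref{itm:discGradPH4} of the discrete gradient pair converts the Hamiltonian increment into $\overline{\zph}^\top\overline{E}\,(\solTimeDiscrete^{k+1}-\solTimeDiscrete^k)$, the discrete state equation \eqref{eq:nonlinearTimeInvariantPHDAE_timeDiscretStateEq} is substituted, the skew-symmetry of $\overline{J}$ cancels one term, the output equation \eqref{eq:nonlinearTimeInvariantPHDAE_timeDiscreteOutput} identifies the supply rate, and the positive semi-definiteness of $\overline{R}$ yields the inequality. Your closing remark that no invertibility, symmetry, or definiteness of $E$ is used anywhere matches the paper's own emphasis on this point.
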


\begin{proof}
	By exploiting \eqref{eq:pHProperties}, \eqref{eq:nonlinearTimeInvariantPHDAE_timeDiscretStateEq}, and the fact that $(\overline{E},\overline{\zph})$ is a discrete gradient pair for $(\ham,E,\zph)$, we obtain
	\begin{align*}
		&\frac{\ham(\solTimeDiscrete^{k+1})-\ham(\solTimeDiscrete^{k})}{t_{k+1}-t_k} = \frac{\overline{\zph}(\solTimeDiscrete^k,\solTimeDiscrete^{k+1})^\top\overline{E}(\solTimeDiscrete^k,\solTimeDiscrete^{k+1})(\solTimeDiscrete^{k+1}-\solTimeDiscrete^{k})}{t_{k+1}-t_k}\\
		&= \overline{\zph}(\solTimeDiscrete^k,\solTimeDiscrete^{k+1})^\top\left(\overline{J}(\solTimeDiscrete^k,\solTimeDiscrete^{k+1})-\overline{R}(\solTimeDiscrete^k,\solTimeDiscrete^{k+1})\right)\overline{\zph}(\solTimeDiscrete^k,\solTimeDiscrete^{k+1})\\
		&\quad+\overline{\zph}(\solTimeDiscrete^k,\solTimeDiscrete^{k+1})^\top\overline{B}(\solTimeDiscrete^k,\solTimeDiscrete^{k+1})\uTimeDiscrete^{k+\frac12}\\
		&= -\overline{\zph}(\solTimeDiscrete^k,\solTimeDiscrete^{k+1})^\top \overline{R}(\solTimeDiscrete^k,\solTimeDiscrete^{k+1})\overline{\zph}(\solTimeDiscrete^k,\solTimeDiscrete^{k+1})+\left(\yTimeDiscrete^{k+\frac12}\right)^\top\uTimeDiscrete^{k+\frac12}\\
		&\le \left(\yTimeDiscrete^{k+\frac12}\right)^\top\uTimeDiscrete^{k+\frac12}
	\end{align*}
	for $k=1,\ldots,\nSnapshots-1$.
\end{proof}

We note that \Cref{thm:timeDiscreteDissipationInequality} addresses only the power balance of the time-discrete system, whereas an analysis of the consistency and the convergence of the time discretization scheme is not within the scope of this paper.
Instead, we investigate the order of convergence numerically in the next section, where we use the midpoint discrete gradient pair from \Cref{thm:midpointDiscreteGradientPair}.

\begin{remark}[Discretization of \eqref{eq:nonlinearTimeInvariantPHDAE} by a classical discrete gradient method]
	If $E$ is pointwise invertible, \eqref{eq:nonlinearTimeInvariantPHDAE} may be transformed to the equivalent system
	\begin{equation}
		\label{eq:transformedPHDAE}
		\begin{aligned}
			\dot{\sol}(t) 	&= (\tilde J(\sol(t))-\tilde R(\sol(t))\nabla\ham(\sol(t))+\tilde B(\sol(t))u(t),\\
			y(t)  				&= \tilde B(\sol(t))^\top\nabla\ham(\sol(t))
		\end{aligned}
	\end{equation}
	for all $t\in\timeInterval$, with $\tilde J \vcentcolon= E^{-1}JE^{-\top}$, $\tilde R \vcentcolon= E^{-1}RE^{-\top}$, and $\tilde B\vcentcolon= E^{-1}B$.
	This transformed system is of the classical port-Hamiltonian ODE form.
	In particular, since the gradient of the Hamiltonian appears explicitly in \eqref{eq:transformedPHDAE}, the transformed system may be treated by classical discrete gradient methods as considered in the previous section.
	However, the notion of discrete gradient pairs as introduced in \Cref{def:discGradPair} allows to obtain a time discretization scheme as in \eqref{eq:nonlinearTimeInvariantPHDAE_timeDiscrete} without having to compute the inverse of $E$.
	Furthermore, the time-discrete dissipation inequality in \Cref{thm:timeDiscreteDissipationInequality} is also valid in the general case where $E$ may be singular.
\end{remark}

\section{Numerical Example}\label{sec:numerics}

As numerical test case we consider the linear advection--diffusion equation with mixed Robin--Neumann boundary conditions
\begin{equation}
	\label{eq:ADE}
	\left\{\begin{aligned}
		\partial_t\sol(t,\spaceVar) &= -c \partial_\spaceVar\sol(t,\spaceVar)+\diffusion\partial_{\spaceVar\spaceVar} \sol \left(t,\spaceVar\right) && \text{for all }(t,\spaceVar)\in\timeInterval\times\Omega,\\
		c\sol(t,0)-\diffusion\partial_{\spaceVar}\sol(t,0) &= cg(t) && \text{for all }t\in\timeInterval,\\
		\partial_\spaceVar \sol(t,1) &= 0 && \text{for all }t\in\timeInterval,\\
		\sol(0,\spaceVar) &= \sol_0(\spaceVar) && \text{for all }\spaceVar\in\Omega
	\end{aligned}\right.
\end{equation}
on the spatial domain $\Omega = (0,1)$.
For the parameters $c\in\R$ and $d\in\R_{\ge 0}$ and the time interval $\timeInterval$, we choose the same values as in \cite[sec.~5.1]{Sch23a}.

We also follow the spatial discretization and model order reduction as in \cite{Sch23a} and obtain a pH system of the form \eqref{eq:nonlinearTimeInvariantPHDAE} with pointwise symmetric and positive semi-definite $E$, cf.~\cite{Sch23a} for more details.
Moreover, for the considered solution trajectories, we have observed in our experiments that $E(\sol(t))$ is even positive definite for all $t\in\timeInterval$, which allows us to use the midpoint discrete gradient pair approach outlined in the previous section for the time discretization.

In \Cref{fig:ADE_ROM_powerBalanceError} we depict the mismatch in the discrete power balance for the implicit midpoint rule and the discrete gradient pair approach.
Since the Hamiltonian of the ROM is not a quadratic function of the ROM state, cf.~\cite{Sch23a}, the implicit midpoint rule yields a comparably large error in the power balance.
In contrast, for the discrete gradient pair approach, the theoretical results in the previous section yield that the corresponding time-discrete power balance is satisfied exactly, at least when ignoring the error of the nonlinear system solve in each time step.
In practice, this is reflected in a power balance error which is several orders of magnitude smaller than for the implicit midpoint rule. 

\begin{figure}[h!]
	\begin{center}
		\includegraphics[scale=1]{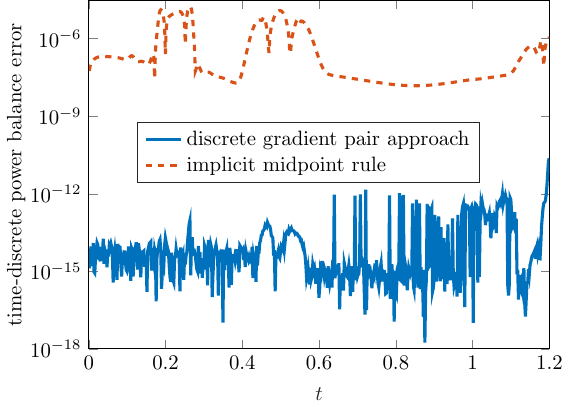}
	\end{center}
	\caption{Comparison of the discrete gradient pair approach and the implicit midpoint rule with time step size $10^{-3}$ in terms of the error in the time-discrete power balance.}
	\label{fig:ADE_ROM_powerBalanceError}
\end{figure}

While the fact that discrete gradient pair methods lead to an exact power balance on the time-discrete level is proven in \cref{sec:discGradPairs} and illustrated in \Cref{fig:ADE_ROM_powerBalanceError}, we have not yet addressed its convergence behavior.
To study the order of convergence numerically, we consider a reference solution obtained by solving the reduced-order model (ROM) via the RADAU IIA method of order five, cf.~\cite[p.~72ff.]{HaiW96}, with time step size $2\cdot 10^{-6}$.
Furthermore, to diminish the influence of the accuracy of the nonlinear equation system solver \texttt{fsolve}, we set the tolerances \texttt{OptimalityTolerance} and \texttt{FunctionTolerance} to $10^{-13}$ and $10^{-8}$, respectively.

Based on the reference solution, we determine the relative errors of the solutions obtained via the implicit midpoint rule and the discrete gradient pair approach for time step sizes ranging from $2\cdot 10^{-6}$ to $2^{17}\cdot 10^{-6}\approx 0.13$.
The specified error values correspond to the relative error with respect to the Frobenius norm of the solution snapshot matrices.
The error decays are depicted in \Cref{fig:discreteGradientVsImplMidpoint_convergence} together with a reference line for the convergence order two.
In particular, we observe that the convergence behavior of both methods is very similar and the discrete gradient pair approach is almost as accurate as the implicit midpoint rule.
In addition, the numerical results indicate a convergence order of two as it is to be expected for the implicit midpoint rule, see for instance \cite[sec.~6.3.2]{DeuB02}.

\begin{figure}[h!]
	\begin{center}
		\includegraphics[scale=1]{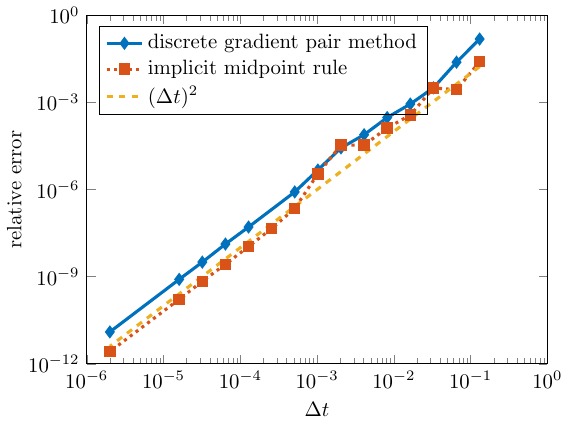}
	\end{center}
	\caption{Convergence of the implicit midpoint rule and the midpoint discrete gradient pair method.}
	\label{fig:discreteGradientVsImplMidpoint_convergence}
\end{figure}

\section{Conclusion}\label{sec:conclusion}

In this paper, we introduce a class of structure-preserving time discretization schemes for nonlinear port-Hamiltonian systems with state-dependent mass matrix.
To this end, we introduce the notion of discrete gradient pairs and show that they may be used to achieve an exact power balance on the time-discrete level.
We also explicitly construct a discrete gradient pair for the special case where the mass matrix is pointwise symmetric and positive definite.
The findings are supported by numerical experiments which involve the time discretization of a nonlinear reduced-order model for an advection--diffusion test case.

While we have presented an explicit construction of a discrete gradient pair in a special case, an interesting future research direction is the construction of further discrete gradient pairs with less restrictions on the mass matrix.
In this context we are especially interested in the case of nonlinear descriptor systems, where the mass matrix is singular.

\paragraph{Code Availability}

The \textsc{Matlab} source code for the numerical examples can be obtained from the doi \href{https://zenodo.org/doi/10.5281/zenodo.10059715}{10.5281/zenodo.10059715}.

\paragraph{Acknowledgments}

I thank Riccardo Morandin and Volker Mehrmann for helpful discussions.

\bibliographystyle{plain}
\bibliography{Refs}

\end{document}